\newtheorem{theorem}{Theorem}
\numberwithin{theorem}{section}
\newtheorem{lemma}[theorem]{Lemma}
\newtheorem{corollary}[theorem]{Corollary}
\newtheorem{definition}[theorem]{Definition}
\newtheorem{question}[theorem]{Question}
\newtheorem*{proposition*}{Proposition}
\newtheorem*{theorem*}{Theorem}
\newtheorem*{corollary*}{Corollary}
\theoremstyle{definition}
\newtheorem{example}[theorem]{Example}
\theoremstyle{remark}
\newtheorem{remark}[theorem]{Remark}
\tikzset{me/.style={to path={
\pgfextra{%
 \pgfmathsetmacro{\startf}{-(#1-1)/2}  
 \pgfmathsetmacro{\endf}{-\startf} 
 \pgfmathsetmacro{\stepf}{\startf+1}}
 \ifnum 1=#1 -- (\tikztotarget)  \else
     let \p{mid}=($(\tikztostart)!0.5!(\tikztotarget)$) 
         in
\foreach \i in {\startf,\stepf,...,\endf}
    {%
     (\tikztostart) .. controls ($ (\p{mid})!\i*6pt!90:(\tikztotarget) $) .. (\tikztotarget)
      }
      \fi   
     \tikztonodes
}}}
\tikzset{main node/.style={circle,fill=blue!20,draw,minimum size=1.3cm,inner sep=0pt},
            }
\title{A note on algebraic rank, matroids, and metrized complexes}
\author{Yoav Len}
\begin{document}

\maketitle
\begin{abstract}
We show that the algebraic rank of divisors on certain graphs is related to the realizability problem of matroids. 
As a consequence, we  produce a series of examples in which the algebraic rank depends on the ground field. We use the theory of metrized complexes to show that equality between the algebraic and combinatorial rank is not a sufficient condition for smoothability of divisors, thus giving a negative answer to a question posed by Caporaso, Melo, and the author. 
\end{abstract}
\section{Introduction}
In \cite{Caporaso}, Caporaso defines the \emph{algebraic rank}, an invariant on graphs which reflects the ranks of line bundles with prescribed degrees on all the nodal curves dual to the given graph. This invariant was studied in \cite{CLM}, and was shown to be bounded above by the  combinatorial rank of divisors on graphs (in the sense of \cite{BN,AC}). 

In this note, we further study the algebraic rank. Using a construction of Cartwright from \cite{Cartwright}, we assign a finite graph $G_M$ and a divisor $D_M$ to any simple matroid of rank $3$, where $G_M$ is the Levi graph of $M$  (see Definition~\ref{matroidGraph}).
Our main result is the following:

\begin{theorem*} [\ref{rankMatroid}]
Let $k$ be an algebraically closed field. Then $D_M$ has algebraic rank $2$ over $k$ if and only if $M$ is realizable over $k$.
\end{theorem*}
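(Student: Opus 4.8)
The plan is to translate the statement into a correspondence between realizations of $M$ as a configuration of points and lines in $\mathbb{P}^2_k$ and pairs $(X,L)$, where $X$ is a nodal curve with rational components and dual graph $G_M$ (Definition~\ref{matroidGraph}) and $L$ is a line bundle whose multidegree is $D_M$ (degree one on each vertex indexing a line of $M$, and zero on each vertex indexing a point). Since the algebraic rank is bounded above by the combinatorial rank by \cite{CLM}, and the combinatorial rank of $D_M$ equals $2$, it is enough to show that the algebraic rank of $D_M$ over $k$ is at least $2$ precisely when $M$ is realizable. The computational backbone is a description of the global sections of such an $L$: writing $C_p\cong\mathbb{P}^1$ for the component attached to a point $p$ and $C_\ell\cong\mathbb{P}^1$ for the one attached to a line $\ell$, joined by a node exactly when $p\in\ell$, a section of $L$ is the data of a constant $a_p$ on each $C_p$ together with a degree-one form $s_\ell$ on each $C_\ell$, subject to $s_\ell(x_{p,\ell})=\lambda_{p,\ell}\,a_p$ at every node, where $x_{p,\ell}\in C_\ell$ is the node and $\lambda_{p,\ell}\in k^{\ast}$ is the gluing datum of $L$. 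Everything reduces to analyzing when this system has a three-dimensional solution space.

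For the forward implication, given a realization of $M$ by points $P_p$ and lines $L_\ell\subset\mathbb{P}^2_k$, I would build $X$ by taking $C_\ell$ to be the line $L_\ell$ and contracting each $C_p$ to the point $P_p$, gluing $C_p$ to $C_\ell$ at $P_p$ whenever $p\in\ell$; this assembles a nodal curve with dual graph $G_M$ together with a morphism $f\colon X\to\mathbb{P}^2$ with $f^{\ast}\mathcal{O}_{\mathbb{P}^2}(1)$ of multidegree $D_M$. Because the image spans the plane, the three coordinate sections pull back to independent sections, so $h^0(X,f^{\ast}\mathcal{O}(1))\ge 3$ and the rank is at least $2$; the combinatorial bound then forces equality, giving algebraic rank $2$.

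For the converse I would start from a pair $(X,L)$ with $h^0(X,L)\ge 3$, choose three independent sections, and read off from the constants $a_p$ a point $P_p=(a_p^{(1)}:a_p^{(2)}:a_p^{(3)})\in\mathbb{P}^2$ for each $p$. The gluing relations show that for each line $\ell$ the points $\{P_p : p\in\ell\}$ are the values of the two-dimensional space of forms on $C_\ell$ at the nodes, hence lie on a common line; thus $\{P_p\}$ satisfies every incidence prescribed by $M$ and, by $h^0\ge 3$, spans $\mathbb{P}^2$. It then remains to upgrade this to a genuine realization of $M$, that is, to rule out coincident points and collinearities not present in $M$, which I would do by letting $L$ vary and arguing that the non-degenerate configurations form a dense open locus.

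The step I expect to be the main obstacle is exactly this last matching between rank and \emph{honest} realizability. A single special line bundle can solve the section system through a degenerate configuration --- for instance one realizing a proper weak image of $M$ with extra collinearities, as the Fano configuration does for the non-Fano matroid in characteristic $2$ --- without $M$ itself being realizable. The content of the theorem is that such configurations are unstable: they occur only for special $L$ and do not survive the passage to the general line bundle of the given multidegree that underlies the algebraic rank. Making this precise, and thereby showing that over a field not realizing $M$ the general line bundle of multidegree $D_M$ has $h^0\le 2$ on every admissible $X$, is both the crux of the argument and the source of the field dependence advertised in the introduction.
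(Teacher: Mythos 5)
Both halves of your proposal have a genuine gap, and they are different in kind. In the forward direction, you exhibit on a single curve $X$ (built from the line arrangement) one line bundle $f^{\ast}\mathcal{O}_{\mathbb{P}^2}(1)$ of multidegree $D_M$ with $h^0\geq 3$. That only proves $\max_{L\in\mathrm{Pic}^{D_M}(X)}\bigl(h^0(L)-1\bigr)\geq 2$. But the algebraic rank is $\max_{X}\,\min_{D'\simeq D_M}\,\max_{L\in\mathrm{Pic}^{D'}(X)}\bigl(h^0(L)-1\bigr)$: to conclude $r^{\text{alg}}_k(D_M)\geq 2$ you need one curve on which \emph{every} chip-firing representative $D'$ of the class of $D_M$ carries a line bundle with three sections, and your construction says nothing about representatives other than $D_M$ itself. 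The paper sidesteps this entirely: it quotes Cartwright's theorem that realizability implies $D_M$ is smoothable over $k[[t]]$ (Definition~\ref{def:smoothable}), and then the specialization inequality $2=r(D)\leq r^{\text{alg}}_k(D_M)\leq r_{G_M}(D_M)=2$ of Corollary~\ref{specializationRefinement}; it is precisely that inequality, proved in \cite{CLM}, that absorbs the minimum over the divisor class. Your curve is morally the special fiber of Cartwright's smoothing, but without the family over the DVR you have no way to twist by components and reach the other representatives $D'$.

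In the converse direction your setup is fine (taking $D'=D_M$ in the minimum does give a single $L$ of multidegree $D_M$ with $h^0\geq 3$), but the step you single out as the crux --- killing degenerate configurations by letting $L$ vary and invoking genericity --- is both unavailable and unnecessary. Unavailable, because the inner quantifier in the definition is a \emph{maximum} over $L\in\mathrm{Pic}^{D'}(X)$, not a general $L$: if even one special $L$ achieved $h^0\geq 3$ through a configuration with extra incidences (for each representative $D'$), the algebraic rank would equal $2$ regardless, so no appeal to the ``general line bundle'' can close the argument; moreover, when $M$ is not realizable there are no honest configurations at all for your dense open locus to be dense in. Unnecessary, because the paper's argument is deterministic: \emph{any} $(X_M,L)$ with multidegree $D_M$ and $h^0\geq 3$ yields an honest realization. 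After the basepoint analysis (Lemma~\ref{nonDegMap} plus the basis-exchange argument), each element component $C^{E}$ maps isomorphically onto a line; two distinct flats containing a common element $E$ correspond to components attached to $C^{E}$ at \emph{distinct} nodes, so injectivity of $\phi_M$ on $C^{E}$ forces their images --- the pairwise intersection points of the lines --- to be distinct. This is exactly what rules out your non-Fano scenario in characteristic $2$: the components $C^{F_{ab}}$ and $C^{F_{ac}}$ meet $C^{E_a}$ at different nodes, hence $\ell_b$ and $\ell_c$ meet $\ell_a$ at different points, and the Fano concurrency cannot appear in the image. The incidence structure of the image is forced by the dual graph; it is not a generic phenomenon, and repairing your proof means replacing the genericity plan by this rigidity argument.
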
 
\noindent Since there are matroids that are realizable over a certain field but not another, we conclude:

\begin{corollary*}[\ref{fieldDependence}]
There exists a divisor whose algebraic rank depends on the field. 
\end{corollary*}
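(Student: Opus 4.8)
The plan is to reduce the statement to a single, classical piece of matroid theory. Theorem~\ref{rankMatroid} asserts that for an algebraically closed field $k$, the divisor $D_M$ has algebraic rank $2$ over $k$ precisely when $M$ is realizable over $k$. Consequently, to produce a divisor whose algebraic rank depends on the field it suffices to exhibit one simple rank-$3$ matroid $M$ that is realizable over some algebraically closed field but not over another: for such an $M$, the divisor $D_M$ supplied by Definition~\ref{matroidGraph} will have algebraic rank $2$ over the first field and algebraic rank different from $2$ over the second.

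The natural candidate is the Fano matroid $F_7$, the rank-$3$ simple matroid on seven elements arising from the point--line incidences of the projective plane $PG(2,2)$. It is a classical fact that $F_7$ is representable over a field $k$ if and only if $\mathrm{char}(k) = 2$; dually, the non-Fano matroid $F_7^-$ is representable exactly when $\mathrm{char}(k) \neq 2$, so either one furnishes a valid example. I would take $M = F_7$ and form $G_{F_7}$ and $D_{F_7}$ via the construction preceding the main theorem.

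To conclude, I would apply Theorem~\ref{rankMatroid} over two algebraically closed fields of differing characteristic. Since the representability of $F_7$ depends only on the characteristic and $\mathbb{F}_2 \subseteq \overline{\mathbb{F}_2}$, the matroid $F_7$ is realizable over $\overline{\mathbb{F}_2}$, whence $D_{F_7}$ has algebraic rank $2$ there. Over $\mathbb{C}$ (or any algebraically closed field of characteristic $0$) the matroid $F_7$ is not realizable, so by the theorem $D_{F_7}$ has algebraic rank different from $2$. Thus the algebraic rank of the single divisor $D_{F_7}$ differs between $\overline{\mathbb{F}_2}$ and $\mathbb{C}$, which is exactly the assertion.

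Granting Theorem~\ref{rankMatroid}, there is essentially no obstacle: all the difficulty has been absorbed into that theorem, and the corollary is a one-line consequence. The only point requiring a moment's care is that the field-dependence of realizability must be verified for the algebraically closed fields appearing in the theorem rather than merely for the prime fields; this is immediate for $F_7$, since its representability is governed solely by the characteristic, a property preserved under passage to the algebraic closure.
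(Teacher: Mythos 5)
Your proposal is correct and follows essentially the same route as the paper: the paper also takes $M$ to be the Fano matroid, invokes the classical fact that it is realizable over $k$ if and only if $\mathrm{char}(k)=2$, and applies Theorem~\ref{rankMatroid} over $\bar{\mathbb{F}}_2$ and $\mathbb{C}$. The only cosmetic difference is that the paper asserts $r^{\text{alg}}_{\mathbb{C}}(D_M)=1$ outright (using the upper bound by the combinatorial rank), whereas you more cautiously conclude only that the rank differs from $2$ over $\mathbb{C}$ --- which is all the corollary requires.
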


On a different vein, as the algebraic rank is bounded from above by the combinatorial rank, it is natural to ask how it compares with other refinements of the combinatorial rank. In Section~\ref{sec:CLMMetrized}, we examine the relation between the rank of divisors on metrized complexes, and the algebraic rank of divisors obtained by forgetting the metrized complex structure. We show, using two examples, that in general, there is no inequality between one and the other. As an application, we obtain a negative answer to the following question, that originally appeared in \cite[Question 2.11]{CLM}:

\begin{question}\label{smoothable}
Assume that the algebraic and combinatorial rank of a divisor $D$ coincide. Is   $D$ smoothable?
\end{question}

\subsection{Algebraic rank}
In what follows, we assume familiarity with  divisor theory on finite vertex-weighted graphs, see \cite{BN,AC} for an exposition. Throughout this note, we shall refer to the standard rank of divisors on graphs (defined in \textit{loc. cit.}) as the \emph{combinatorial rank}. 

The \emph{algebraic rank} is an invariant of divisor classes, first introduced in \cite{Caporaso}. Given a divisor $D$ on a finite graph $G$, the invariant should reflect the ranks of line bundles on nodal curves dual to $G$, whose degrees on each component are prescribed by $D$. That is, the degree of the restriction to each component equals the coefficient of $D$ at the corresponding vertex.
However, this involves making several choices:
\begin{enumerate}
\item The graph $G$ admits many different dual curves.
\item The invariant is intended to be well defined on divisor classes, but there is no canonical choice of representative in the divisor class of $D$.
\item There are many different line bundles with degrees prescribed by the divisors in the class. 
\end{enumerate}
The algebraic rank is made free of all choices, by defining it as  
$$
r^{\text{alg}}_k(D) = \max_{X\in M_k^\text{alg}(G)}\Biggl[\min_{\overset{}{E\simeq D}}\biggl(\max_{L\in\text{Pic}^{E}(X)}(h^0(L)-1)\biggr)\Biggr],
$$
\noindent where  $M_k^\text{alg}(G)$ is the set of isomorphism classes of curves defined over $k$ having $G$ as their dual graph, and $\text{Pic}^{E}(X)$ is the set of line bundles on $X$ whose multidegree is prescribed by $E$.

\noindent As shown in  \cite{CLM}, the algebraic rank satisfies the following familiar properties:
\begin{theorem}\label{thm:CLM}
The algebraic rank of a divisor on a finite graph is bounded above by the combinatorial rank, satisfies a Riemann-Roch equality, and a specialization lemma. 
\end{theorem}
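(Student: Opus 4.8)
The plan is to establish the three assertions separately, in each case relating the curve-level quantity $h^0(L)-1$ to its combinatorial shadow on $G$ and then propagating the comparison through the nested $\max$--$\min$--$\max$ defining $r^{\text{alg}}_k$.

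For the upper bound, write $r(D)$ for the combinatorial rank, which is constant on the linear class of $D$. Since the inner minimum is at most its value at the representative $D'=D$, it suffices to show that for every model $X\in M^{\text{alg}}$ and every $L\in\text{Pic}^{D}(X)$ one has $h^0(L)-1\le r(D)$. Suppose $h^0(L)=s+1$; I would deduce $r(D)\ge s$, i.e. that $D-\bar E$ is equivalent to an effective divisor on $G$ for each effective $\bar E$ of degree $s$. Given such $\bar E$, lift it to a reduced divisor $E$ on the smooth locus of $X$ supported on $\deg\bar E$ distinct points, with $\bar E(v)$ of them on the component dual to $v$, so that $\tau(E)=\bar E$, where $\tau$ records which component a smooth point lies on. Vanishing at a smooth point is one linear condition, hence $h^0(L(-E))\ge h^0(L)-\deg E\ge 1$ and $L-E$ is equivalent to an effective divisor on $X$. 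The crucial step is to specialize this effectivity to $G$: the multidegree of $L(-E)$ is $D-\bar E$, and I would argue that the existence of a global section forces $D-\bar E$ to be equivalent to an effective divisor on $G$. I expect this to be the main obstacle, because a section may vanish identically along a component; accounting for this correctly amounts to a sequence of chip-firing moves, and is exactly the specialization of linear systems established by Baker, whose invocation closes the argument.

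For the Riemann--Roch equality I would argue curve by curve. A nodal curve $X$ dual to $G$ is Gorenstein of arithmetic genus $g$ equal to the genus of $G$, so Serre duality gives $h^1(L)=h^0(\omega_X\otimes L^{-1})$ and Riemann--Roch on $X$ reads $h^0(L)-h^0(\omega_X\otimes L^{-1})=\deg D'-g+1$. The multidegree of $\omega_X$ at the component dual to $v$ equals $2g_v-2+\text{val}(v)=K_G(v)$, the value of the canonical divisor of the weighted graph, so $\omega_X\otimes L^{-1}\in\text{Pic}^{K_G-D'}(X)$. Setting $c=\deg D-g+1$, this gives the pointwise identity $(h^0(L)-1)=(h^0(\omega_X\otimes L^{-1})-1)+c$ for all $X,D',L$. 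Since $L\mapsto\omega_X\otimes L^{-1}$ is a bijection $\text{Pic}^{D'}(X)\to\text{Pic}^{K_G-D'}(X)$ and $D'\mapsto K_G-D'$ is a bijection between the class of $D$ and that of $K_G-D$, while $M^{\text{alg}}$ depends only on $G$, the constant $c$ passes outside all three operations and yields $r^{\text{alg}}_k(D)-r^{\text{alg}}_k(K_G-D)=\deg D-g+1$.

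For the specialization lemma, let $\mathcal X/R$ be a regular smoothing of a model $X_0$ dual to $G$, let $\mathcal L$ be a line bundle on $\mathcal X$ with generic-fiber restriction $L_\eta$ on the smooth curve $X_\eta$, and let $D=\tau(\mathcal L)$ be its specialization to $G$; I would show $r_{X_\eta}(L_\eta)\le r^{\text{alg}}_k(D)$. Twisting $\mathcal L$ by vertical divisors leaves $L_\eta$ unchanged but translates the multidegree of $\mathcal L|_{X_0}$ by the image of the graph Laplacian, so every $D'\simeq D$ is realized as the multidegree of some twist $\mathcal L'$ with $\mathcal L'_\eta\cong L_\eta$. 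For each such $D'$, upper semicontinuity of $h^0$ in the family gives $\max_{L\in\text{Pic}^{D'}(X_0)}(h^0(L)-1)\ge h^0(\mathcal L'_0)-1\ge h^0(L_\eta)-1=r_{X_\eta}(L_\eta)$. As this holds for every $D'$, the inner minimum over $D'$ is at least $r_{X_\eta}(L_\eta)$, and selecting the model $X_0$ in the outer maximum gives $r^{\text{alg}}_k(D)\ge r_{X_\eta}(L_\eta)$, completing the plan.
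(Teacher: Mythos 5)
This theorem is not actually proved in the paper: it is imported verbatim from \cite{CLM} (``As shown in \cite{CLM}\ldots''), so your proposal can only be judged on its own merits, and its first part contains a fatal gap. You begin by discarding the inner minimum, reducing to the claim that for every dual curve $X$ and \emph{every} $L\in\text{Pic}^{D}(X)$ one has $h^0(L)-1\le r_G(D)$. That claim is false, and the minimum over representatives $D'\simeq D$ is precisely what the definition cannot do without. Concretely, let $G$ have two vertices $u,v$ of weight $0$ joined by three edges, and let $D=5u-4v$. The representatives of its class are $(5+3k)u-(4+3k)v$, $k\in\mathbb{Z}$, none of which is effective, so $r_G(D)=-1$. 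Now let $X$ be two copies of $\mathbb{P}^1$ glued at three points and $L$ any line bundle of multidegree $(5,-4)$: a global section is a pair $(s_u,s_v)$ with $s_v\in H^0(\mathbb{P}^1,\mathcal{O}(-4))=0$, so the only constraint on $s_u\in H^0(\mathbb{P}^1,\mathcal{O}(5))$ is vanishing at the three nodes, giving $h^0(L)=3$, i.e.\ $h^0(L)-1=2>-1=r_G(D)$. The same example refutes the weaker principle your argument actually rests on (that a nonzero section of $L(-E)$ forces $D-\bar E$ to be equivalent to an effective divisor on $G$): when a section vanishes identically on components, its zero locus does not produce an effective divisor of the expected multidegree. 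Nor can this be closed by ``Baker's specialization lemma'' as you suggest: that lemma compares a divisor on a smooth curve over a valued field with its specialization to the dual graph, whereas here you need a statement about line bundles on a nodal curve over $k$ itself, and semicontinuity of $h^0$ points the wrong way to transfer your section to a generic fibre. The actual proof in \cite{CLM} must use the freedom in choosing $D'$: one twists by the subcurves on which sections vanish identically (each twist being a chip-firing move) to reach a good representative of the class, and that is the hard content of the theorem.

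By contrast, your second and third parts are essentially correct. The Riemann--Roch equality does follow from Serre duality on the Gorenstein curves $X$, the identity $\underline{\deg}\,\omega_X=K_G$, and the observation that the resulting constant $\deg D-g+1$ and the two bijections ($L\mapsto\omega_X\otimes L^{-1}$ and $D'\mapsto K_G-D'$) pass through the $\max$--$\min$--$\max$. The specialization inequality does follow from twisting $\mathcal{L}$ by vertical components (which realizes every representative $D'$ of the class without changing the generic fibre, using regularity of $\mathcal{X}$) together with upper semicontinuity of $h^0$ and the fact that $X_0\in M^{\text{alg}}$. But the first assertion, which is the substantial one, remains unproved in your plan.
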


Let $R$ be a discrete valuation ring with fraction field $K$ and algebraically closed residue field $k$. 
Let $X$ be a smooth projective curve over $K$, and $\mathcal{X}$ a regular semistable model for $X$. That is,  $\mathcal{X}$ is a proper, flat, and regular scheme over $R$, whose generic fiber is isomorphic to $X$, and whose special fiber is a reduced nodal curve $X_0$ over $k$. Combining the theorem above with Baker's Specialization Lemma (\cite{BN,AC}), we get:
\begin{corollary}\label{specializationRefinement}
Let $D$ be a divisor on $X$ that specializes to a divisor $D_G$ on $G$. Then

$$
r(D) \leq r^{\text{alg}}_k(D_G)\leq r_G(D_G),
$$
where $r_G$ stands for the combinatorial rank. 
\end{corollary}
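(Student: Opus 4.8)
The plan is to establish the two inequalities separately. The right-hand inequality $r^{\text{alg}}_k(D_G) \le r_G(D_G)$ needs no new argument: it is exactly the comparison between the algebraic and combinatorial ranks recorded in Theorem~\ref{thm:CLM}. All of the content lies in the left-hand inequality $r(D) \le r^{\text{alg}}_k(D_G)$, and here my strategy is to use the special fiber $X_0$ as an explicit witness for the outer maximum $\max_{X \in M^\text{alg}}$ in the definition of the algebraic rank. Since $X_0$ is a reduced nodal curve over $k$ whose dual graph is $G$, it belongs to $M^\text{alg}$, so it suffices to bound $r(D)$ by the inner quantity $\min_{D' \simeq D_G}\max_{L \in \text{Pic}^{D'}(X_0)}(h^0(L)-1)$ computed on this single curve.

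To bound that inner quantity I would spread the line bundle $\mathcal{O}_X(D)$ out to a line bundle $\tilde{\mathcal L}$ on the regular model $\mathcal X$ — for instance via the closure of $D$ — so that its restriction to $X_0$ has multidegree $D_G$, which is precisely the hypothesis that $D$ specializes to $D_G$. The key point is that any $D' \simeq D_G$ on $G$ differs from $D_G$ by an element in the image of the graph Laplacian, and on the model this corresponds to twisting $\tilde{\mathcal L}$ by an integral combination $\sum_i a_i C_i$ of the components $C_i$ of $X_0$. Because the entire special fiber $\sum_i C_i$ is principal on $\mathcal X$, such a twist leaves the generic fiber unchanged while shifting the multidegree on $X_0$ from $D_G$ to $D'$; call the resulting bundle $\tilde{\mathcal L}_{D'}$. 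Upper semicontinuity of $h^0$ in the flat family $\mathcal X \to \operatorname{Spec} R$ then gives $h^0(X, \tilde{\mathcal L}_{D'}|_X) \le h^0(X_0, \tilde{\mathcal L}_{D'}|_{X_0})$, and since $\tilde{\mathcal L}_{D'}|_{X_0}$ lies in $\text{Pic}^{D'}(X_0)$ the right-hand side is at most $\max_{L \in \text{Pic}^{D'}(X_0)} h^0(L)$. As the left-hand side equals $r(D)+1$, subtracting one and minimizing over $D' \simeq D_G$ gives $r(D) \le \min_{D' \simeq D_G}\max_{L \in \text{Pic}^{D'}(X_0)}(h^0(L)-1)$, and the left inequality follows because $X_0 \in M^\text{alg}$.

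The step I expect to be the main obstacle is the precise dictionary underlying the twist: one must verify that every linear equivalence class $D' \simeq D_G$ is realized by an admissible vertical twist fixing the generic fiber, and that the induced change of multidegree on $X_0$ matches chip-firing on $G$ exactly. This is where the identification of the graph Laplacian with the intersection pairing of the components $C_i$ on a regular semistable model is used, and it is also the mechanism behind Baker's Specialization Lemma itself. Once this bookkeeping is in place the semicontinuity argument is routine, and the corollary is simply the concatenation of the left inequality with the comparison furnished by Theorem~\ref{thm:CLM}.
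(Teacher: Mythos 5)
Your proof is correct, but it takes a genuinely different route from the paper's. The paper gives no argument at all for Corollary~\ref{specializationRefinement}: it is stated as an immediate combination of cited results, with the left inequality $r(D)\le r^{\text{alg}}_k(D_G)$ being exactly the ``specialization lemma'' clause of Theorem~\ref{thm:CLM} (one of the results of \cite{CLM}, proved there against the background of Baker's Specialization Lemma), and the right inequality being the ``bounded above by the combinatorial rank'' clause. You cite Theorem~\ref{thm:CLM} only for the right inequality and instead re-derive the left one from scratch: take the special fiber $X_0$ as the witness for the outer maximum in the definition of $r^{\text{alg}}_k$, spread $\mathcal{O}_X(D)$ out over the regular model via the closure of $D$, realize every $D'\simeq D_G$ by a vertical twist $\sum_i a_i C_i$ (using the identification of the graph Laplacian with the intersection pairing of the components, valid because $\mathcal{X}$ is regular), and conclude by upper semicontinuity of $h^0$ over the discrete valuation ring. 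This is in substance the standard proof of the specialization property for the algebraic rank, i.e., you are unpacking the proof of the result the paper merely cites. What your version buys is self-containedness and transparency about the mechanism -- in particular it makes clear that the single curve $X_0$ suffices as witness and that the minimum over $D'\simeq D_G$ is controlled uniformly by vertical twists; what the paper's version buys is brevity, delegating precisely this bookkeeping (which you rightly flag as the delicate step) to \cite{CLM}. I see no gap in your argument: the twist leaves the generic fiber untouched since vertical divisors miss it, every class $D'\simeq D_G$ is hit because principal divisors on $G$ are integer combinations of vertex firings, and semicontinuity applies since the twisted bundle is flat over the base.
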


\begin{definition}\label{def:smoothable}
A divisor $D$ as above is said to be a \emph{lifting} of $D_G$. If $D_G$ admits a lifting to a divisor $D$ with $r(D)=r_G(D_G)$, then we say that $D_G$ is \emph{smoothable}. 
\end{definition}


\section{Dependence of the algebraic rank on the ground field}\label{sec:CLMField}
In this section we show that a divisor can have different algebraic ranks over different (algebraically closed) fields. For that purpose, we appeal to matroid theory. 
Rather than defining matroids in general, we only define here simple matroids of rank $3$. See \cite{Ox} for a comprehensive treatment of matroids of any rank.

\begin{definition}\label{simpleMatroid}
A rank $3$ \emph{simple matroid} $M$ consists of a finite set $\mathcal{E}$ and a collection of subsets $\mathcal{F}$ satisfying:
\begin{enumerate}
\item Every pair of distinct elements $E_1$ and $E_2$ in $\mathcal{E}$ are contained in exactly one element of $\mathcal{F}$.
\item $\mathcal{F}$ contains at least two elements. 
\end{enumerate}
We refer to every $E$ in $\mathcal{E}$ as an \emph{element} of the matroid, and to $F$ in $\mathcal{F}$ as a \emph{flat}. The unique flat containing a pair $E_i$ and $E_j$ of elements will be denoted $F_{ij}$.
\end{definition}

In \cite{Cartwright}, Cartwright assigns a graph and a divisor to every simple matroid of rank $3$  as follows:
\begin{definition}\label{matroidGraph}
Let $M$ be a simple matroid of rank $3$. Let $G_M$ be the incidence graph of $M$. Namely, $G_M$ has a vertex $v_E$ for each element $E$, and a vertex $v_F$ for each flat $F$ of $M$. There is an edge between $v_E$ and $v_F$ whenever $E$ is contained in $F$.
We define $D_M$ to be the divisor with a single chip on every vertex $v_E$ corresponding to an element  of $M$.
\end{definition}
\noindent As shown in \cite[Proposition 2.2]{Cartwright}, the combinatorial rank of $D_M$ is 2.
A similar construction was used independently by Castravet and Tevelev  to study  planar realization of hypertrees  \cite{Hypertrees}.  

\begin{definition}
A rank $3$ simple matroid is \emph{realizable} over $k$ if there is a bijection between its elements and a collection $\{\ell_1,\ldots,\ell_m\}$ of lines in $\mathbb{P}^2_k$, such that $\ell_{i_1},\ldots,\ell_{i_k}$ have non-empty intersection  if and only if the corresponding elements of $M$ are contained in a flat. 
\end{definition}

The rest of this section is devoted to proving the following result:
\begin{theorem} \label{rankMatroid}
Let $M$ be a simple matroid of rank $3$. Then $D_M$ has algebraic rank $2$ over $k$ if and only if $M$ is realizable over $k$.
\end{theorem}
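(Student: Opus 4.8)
The plan is to translate the two quantifiers defining the algebraic rank into the geometry of line arrangements. The key dictionary is this: on a nodal curve $X$ dual to $G_M$, a line bundle $L$ of multidegree $D_M$ together with three independent global sections encodes a morphism $\phi\colon X\to\mathbb{P}^2_k$. Since $D_M$ has degree $1$ on each element-component $C_E$ and degree $0$ on each flat-component $C_F$, such a $\phi$ contracts every $C_F$ to a point $p_F$ and sends every $C_E$ either to a point or isomorphically onto a line $\ell_E$, while compatibility at the nodes forces $p_F\in\ell_E$ whenever $E\in F$. Thus $h^0(L)\ge 3$ is, up to degeneracies, the same data as a configuration of lines and points in $\mathbb{P}^2_k$ realizing the incidences of $M$. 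Throughout I will use that $r^{\mathrm{alg}}_k(D_M)\le r_{G}(D_M)=2$ by Theorem~\ref{thm:CLM} and Definition~\ref{matroidGraph}, so both directions reduce to deciding whether the value $2$ is attained.

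For the direction ``realizable $\Rightarrow$ rank $2$'' I would avoid the minimum over $D'$ entirely by producing a rank-$2$ lifting and appealing to Corollary~\ref{specializationRefinement}. Given lines $\ell_1,\dots,\ell_m\subset\mathbb{P}^2_k$ realizing $M$, set $Y=\bigcup_i\ell_i$ and smooth it inside $\mathbb{P}^2_R$, for instance through the pencil $\prod_iL_i+t\,G=0$ with $G$ a general form of degree $m$, whose generic member $C_K$ is a smooth plane curve and whose special member is $Y$. Passing to a regular semistable model and, where necessary, separating the transversal crossings by elementary modifications, I arrange the special fiber to be exactly the dual curve of $G_M$, with one exceptional $\mathbb{P}^1$ over each flat-point $p_F$ playing the role of $C_F$. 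The pullback of $\mathcal{O}_{\mathbb{P}^2}(1)$ then has degree $1$ on each strict transform $C_{E_i}$ and degree $0$ on each $C_F$, so it specializes to $D_M$, while the hyperplane class $H$ on $C_K$ satisfies $r(H)=2$. Corollary~\ref{specializationRefinement} yields $2=r(H)\le r^{\mathrm{alg}}_k(D_M)\le 2$, as desired.

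For the converse I would work directly on the special fiber, since algebraic rank does not force smoothability. Assume $r^{\mathrm{alg}}_k(D_M)=2$ and fix a dual curve $X$ attaining it. Applying the definition to the representative $D'=D_M$ produces $L$ with $h^0(L)\ge 3$; three independent sections give $\phi\colon X\to\mathbb{P}^2_k$ whose image is not contained in a line, since otherwise those sections would satisfy a linear relation, so the image spans $\mathbb{P}^2_k$. Setting $\ell_E=\overline{\phi(C_E)}$ and $p_F=\phi(C_F)$ recovers a configuration with $p_F\in\ell_E$ whenever $E\in F$. Moreover, on each $C_E\cong\mathbb{P}^1$ that maps to a line the restriction of $\phi$ is an isomorphism, so the nodes of $C_E$, which are distinct for distinct flats through $E$, map to distinct points; this already shows that the flats through a common element are sent to distinct points of $\ell_E$.

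The main obstacle is promoting this weak configuration to a genuine realization, that is, ruling out the degeneracies in which some $C_E$ collapses to a point, two elements map to one line, two flats map to one point, or an unexpected incidence $p_F\in\ell_E$ with $E\notin F$ appears (any such incidence would instead realize a matroid with strictly more dependencies than $M$). This is precisely where the minimum over $D'$ must be exploited: for each putative degeneracy I expect to produce, by a suitable chip-firing move out of the offending vertices, a representative $D'\simeq D_M$ for which every line bundle of multidegree $D'$ has $h^0\le 2$, contradicting $r^{\mathrm{alg}}_k(D_M)=2$. Carrying out this analysis, namely showing that each failure of faithfulness lowers the attainable $h^0$ for some representative, is the technical heart of the converse and the step I expect to require the most care.
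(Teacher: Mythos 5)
Both directions of your proposal contain genuine gaps, and the one in the forward direction is fatal to the construction you propose. In a flat proper family the arithmetic genus of the fibers is constant. Your generic fiber is a \emph{smooth} plane curve of degree $m$, of genus $\binom{m-1}{2}=\sum_{F}\binom{m_F}{2}-m+1$ (writing $m_F=|F|$ and using that every pair of elements lies in exactly one flat), whereas a reduced nodal curve whose dual graph is $G_M$ with its zero vertex weights has arithmetic genus $b_1(G_M)=\sum_F m_F-m-|\mathcal{F}|+1$. The difference is $\sum_F\binom{m_F-1}{2}$, which is strictly positive as soon as some flat contains at least three elements --- the only situation in which Theorem~\ref{rankMatroid} has content (for the Fano matroid: genus $15$ versus $8$). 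So no sequence of blow-ups or ``elementary modifications'' of your pencil can make the special fiber ``exactly the dual curve of $G_M$ with one exceptional $\mathbb{P}^1$ over each flat-point.'' Concretely, blowing up the total space of $\prod_i L_i+tG=0$ at a point where $m_F$ lines meet puts the exceptional curve into the special fiber with multiplicity $m_F$, and semistable reduction of the local singularity $\prod_{i=1}^{m_F}\ell_i = s^{m_F}u$ replaces it by a component of genus $\binom{m_F-1}{2}$, not a rational curve; blow-ups cannot destroy genus. This obstruction is exactly why the smoothing direction is a real theorem: the smoothing must have generic fiber of genus $b_1(G_M)$ mapping to $\mathbb{P}^2$ as a \emph{singular} degree-$m$ curve. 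The paper fills this hole by citing \cite[Theorem 1.5]{Cartwright} and then applying Corollary~\ref{specializationRefinement}, which is the step your sketch cannot replace.

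For the converse, your setup agrees with the paper's (specialize the definition to $D'=D_M$, obtain $L$ with $h^0(L)\geq 3$, pass to the induced non-degenerate map to $\mathbb{P}^2$), but you stop precisely at the decisive point: excluding collapsed element-components, coincident lines, coincident flat-points, and unexpected incidences is deferred to a plan you only ``expect'' to carry out, so nothing is proved. Moreover the plan aims in an unpromising direction: exploiting the minimum over $D'\simeq D_M$ via chip-firing would require an \emph{upper} bound $h^0\le 2$ for every line bundle of some other multidegree $D'$, a statement of a different nature that your sketch gives no way to access. The paper never varies $D'$ at all. Every degeneracy is excluded for the single bundle $L$ of multidegree $D_M$ by combining: (i) non-degeneracy of the map from Lemma~\ref{nonDegMap}; (ii) the observation that a section vanishing at one point of a degree-$0$ component $C^F$ vanishes identically on it, so vanishing and basepoints propagate through flat-components and create basepoints on adjacent element-components; and (iii) the matroid fact that every pair of elements completes to a basis (Lemma~\ref{matroidBasis}), which upgrades ``two element-components share an image line'' to ``all components map into one line,'' contradicting (i), and which, together with injectivity of $\phi_M$ on each element-component, rules out unexpected concurrences. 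With these ingredients the case analysis closes with no further input; the step you postponed is not a technicality but the actual proof, and the mechanism you reserved for it is not the one that works.
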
 

\begin{corollary}\label{fieldDependence}
There exists a divisor whose algebraic rank depends on the field. 
\end{corollary}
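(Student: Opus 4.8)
The plan is to deduce the corollary directly from Theorem~\ref{rankMatroid}, by feeding that theorem a matroid whose realizability is sensitive to the characteristic of the ground field. The cleanest candidate is the Fano matroid $F_7$: the rank~$3$ simple matroid whose seven elements and seven flats are the points and lines of the projective plane over $\mathbb{F}_2$, each flat containing exactly three elements. It is a classical fact of matroid theory (see \cite{Ox}) that $F_7$ is realizable over a field $k$ if and only if $\operatorname{char} k = 2$. Symmetrically, one could use the non-Fano matroid, which is realizable precisely when $\operatorname{char} k \neq 2$; either choice will do.

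First I would verify that $F_7$ genuinely meets Definition~\ref{simpleMatroid}: any two of its seven elements lie in a unique common flat, and there are more than two flats, so $F_7$ is a simple matroid of rank~$3$ and the construction of Definition~\ref{matroidGraph} yields a well-defined graph $G_{F_7}$ together with its divisor $D_{F_7}$. This fixes, once and for all, a single combinatorial pair $(G_{F_7}, D_{F_7})$ that does not depend on any field.

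Next I would apply Theorem~\ref{rankMatroid} twice, over two different algebraically closed fields. Over $\overline{\mathbb{F}_2}$ the matroid $F_7$ is realizable, hence $r^{\text{alg}}_{\overline{\mathbb{F}_2}}(D_{F_7}) = 2$. Over any algebraically closed field $k$ with $\operatorname{char} k \neq 2$, say $\mathbb{C}$, the matroid $F_7$ is not realizable, so Theorem~\ref{rankMatroid} gives $r^{\text{alg}}_k(D_{F_7}) \neq 2$; combined with the upper bound $r^{\text{alg}}_k(D_{F_7}) \leq r_{G_{F_7}}(D_{F_7}) = 2$ supplied by Theorem~\ref{thm:CLM} and the fact (due to Cartwright) that $D_M$ has combinatorial rank~$2$, this forces $r^{\text{alg}}_k(D_{F_7}) \leq 1$. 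Since a value of $2$ cannot equal anything that is at most $1$, the one divisor $D_{F_7}$ on the fixed graph $G_{F_7}$ has strictly different algebraic ranks over $\overline{\mathbb{F}_2}$ and over $\mathbb{C}$, which is exactly the assertion of the corollary.

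The only real obstacle here is external to the present paper: one must invoke the characteristic-dependence of the realizability of $F_7$, which is where all the content sits; everything else is a formal pairing of Theorem~\ref{rankMatroid} with the bound by the combinatorial rank. I do not need the exact value of the algebraic rank on the non-realizable side — the inequality $r^{\text{alg}}_k(D_{F_7}) \leq 1 < 2$ already separates the two fields — so I would deliberately avoid computing it, even though one expects it to equal $1$.
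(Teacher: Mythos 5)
Your proof is correct and follows essentially the same route as the paper: the paper also takes the Fano matroid, invokes its realizability exactly in characteristic $2$, and applies Theorem~\ref{rankMatroid} over $\bar{\mathbb{F}}_2$ and $\mathbb{C}$ to separate the two algebraic ranks. If anything, you are slightly more careful than the paper's own wording, since you only claim $r^{\text{alg}}_{\mathbb{C}}(D_{F_7})\leq 1$ (which is all the theorem plus the combinatorial bound yields, and all that is needed), whereas the paper asserts the exact value $1$ without further argument.
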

\begin{proof}
Let $M$ be the Fano matroid, consisting of seven elements, such that every three of them are contained in a unique flat (Figure~\ref{Fano}). Then it is known that $M$ is realizable over  $k$ if and only if the characteristic of $k$ is $2$ \cite[Example 4.5]{KatzMatroids}. Therefore, Theorem~\ref{rankMatroid} implies that 
$r^\text{alg}_{\mathbb{C}}(D_M) $ is at most $1$,  but   $r^\text{alg}_{\bar{\mathbb{F}}_2}(D_M) = 2$.

\begin{figure}[h]
\centering

\begin{tikzpicture}[scale=.5]

 \coordinate (A) at (0,0); 
    \coordinate (B) at (6,0);
    \coordinate (C) at ($ (A)!1!60:(B) $); 

    \coordinate (AB) at ($ (A)!.5!(B) $); 
    \coordinate (AC) at ($ (A)!.5!(C) $); 
    \coordinate (BC) at ($ (B)!.5!(C) $); 
    \coordinate (ABC) at ($ (A)!2.0/3.0!(BC) $); 

 \draw [thick] (A) -- (B);
 \draw [thick] (B) -- (C);
 \draw [thick] (C) -- (A);
 \draw [thick] (A) -- (BC);
 \draw [thick] (B) -- (AC);
 \draw [thick] (C) -- (AB);

\draw [thick] (ABC) circle (1.75cm); 
    \fill (A) circle (6pt);
    \fill (B) circle (6pt);
    \fill (C) circle (6pt);
    \fill (AB) circle (6pt);
    \fill (AC) circle (6pt);
    \fill (BC) circle (6pt);
    \fill (ABC) circle (6pt);
    
\end{tikzpicture}   

\caption[The Fano matroid]{The Fano Plane. The lines correspond to the elements of the Fano matroid, and the marked points correspond to the flats.}
\label{Fano}
\end{figure}
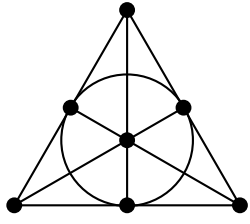
\end{proof}

\noindent The graph $G_M$ from the corollary is known as the \emph{Heawood graph}. A similar graph but with generic edge lengths was used by Jensen to show that the locus  of Brill--Noether general graphs is not dense in the moduli space of tropical curves \cite{Jensen}. 

Let $(L,H)$ be a linear series on a nodal curve $X$. Then $H$ has a \emph{basepoint} at a point $p$ if all the sections of $H$ vanish at $p$. 
\begin{lemma}\label{nonDegMap}
Let $H$ be a linear series of rank $r\geq 0$ on a nodal curve $X$, and let $Y$ be the union of the components where not all the global sections of $H$ vanish identically. Then $H$ induces a non-degenerate rational map from the smooth locus of $Y$ to $\mathbb{P}^r$.
\end{lemma}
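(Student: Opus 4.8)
The plan is to make the map explicit and reduce the non-degeneracy claim to a one-line linear-independence statement that is forced by the very definition of $Y$. Fix a basis $s_0,\ldots,s_r$ of $H$ (so $\dim H = r+1$) and consider the rational map $\phi = [s_0:\cdots:s_r]$ from $Y$ to $\mathbb{P}^r$. The first step is to verify that $\phi$ is an honest morphism on the smooth locus $Y^{\mathrm{sm}}$. Away from the common zero locus of the $s_i$ this is immediate; at an arbitrary point $p\in Y^{\mathrm{sm}}$ the local ring is a DVR, so after choosing a uniformizer $t$ and setting $a=\min_i\operatorname{ord}_p(s_i)$, the tuple $[t^{-a}s_0(p):\cdots:t^{-a}s_r(p)]$ is a well-defined point of $\mathbb{P}^r$ and resolves the indeterminacy. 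Here one uses two facts: that $p$ lies on a \emph{unique} component $C$ of $Y$ (it is a smooth point), and that by the definition of $Y$ some section of $H$ does not vanish identically on $C$, so that $a<\infty$ and the recipe genuinely produces a point. This upgrades $\phi$ to a morphism $Y^{\mathrm{sm}}\to\mathbb{P}^r$.

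Next I would prove non-degeneracy by contradiction. Suppose the image were contained in a hyperplane $\{\sum_i c_i x_i = 0\}$ with $c\neq 0$. At a general point $p$ of any component $C\subseteq Y$ the point $p$ is not a base point, so $\phi(p)=[s_0(p):\cdots:s_r(p)]$ directly, and the hyperplane condition reads $\sum_i c_i s_i(p)=0$. Since this holds on a dense open subset of the integral curve $C$, the global section $s=\sum_i c_i s_i$ restricts to zero on $C$, and as $C$ was arbitrary we conclude $s|_Y = 0$.

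The crux — and the only place the hypothesis enters — is to rule this out. Because the $s_i$ form a basis of $H$ and $c\neq 0$, the section $s$ is a \emph{nonzero} element of $H^0(X,L)$. On the other hand, every component $C'$ of $X$ not contained in $Y$ is, by the definition of $Y$, a component on which all of $H$ vanishes identically; in particular $s|_{C'}=0$. Combining this with $s|_Y=0$ forces $s$ to vanish on every component of $X$, i.e.\ $s=0$, a contradiction. Hence no hyperplane contains the image and $\phi$ is non-degenerate. I expect the only (mild) obstacle to be the bookkeeping in the first step — checking that the morphism is defined at \emph{every} smooth point, including base points and the smooth points of $Y$ that are nodes of $X$ meeting $X\setminus Y$ — while the essential content, the non-degeneracy, is an immediate consequence of how $Y$ was defined.
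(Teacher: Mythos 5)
Your proof is correct and follows essentially the same route as the paper: the paper defines the morphism by pulling the sections of $H$ back to the normalization of $Y$ (your DVR resolution of indeterminacy at smooth points is the same device), and it proves non-degeneracy by the identical contradiction, namely that a containing hyperplane would force the linear combination $\sum_i c_i s_i$ to vanish, contradicting that the $s_i$ form a basis. If anything, your write-up is slightly more complete, since the paper leaves implicit exactly the step you isolate as the crux: that vanishing of $\sum_i c_i s_i$ on $Y$, combined with the definition of $Y$ (all sections of $H$ vanish identically on the components of $X$ not in $Y$), forces this section to be zero on all of $X$.
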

\begin{proof}
Let  $\pi:\tilde{Y}\to Y$ be the normalization of $Y$. Then the pullback to $\tilde{Y}$ of the sections of $H$ induce a rational map $\tilde{\phi}:\tilde{Y}\dashrightarrow\mathbb{P}^r$. For every smooth point $p$ of $Y$ such that $\tilde\phi$ is defined on $\pi^{-1}{p}$, let  $\phi(p) = \tilde\phi(\pi^{-1}{p})$.
To see that the map is non-degenerate, suppose by contradiction that the image of $\phi$ is contained in a hyperplane whose defining equation is $a_0\cdot x_0 + \cdots + a_r\cdot x_r = 0$. Then $a_0\cdot\sigma_0 +\cdots+a_r\cdot\sigma_r=0$, contradicting the fact that  $\{\sigma_i\}$ is a basis.  
\end{proof}

\begin{remark}
If $H$ has no basepoint at a node of $X$, then $\phi$ may be extended to that point.
\end{remark}

\begin{example}
Let $X$ be a union of three rational curves $A,B$ and $C$, such that each pair meets at a single point. Let $L$ be a line bundle of degree $1$ on each component, and suppose that it has a single basepoint at the intersection of $A$ and $B$. Then the map $\phi$ which corresponds to the complete linear system of $L$ is defined everywhere away from the basepoint, and maps $C$ isomorphically onto a line. Since there is a basepoint on $A$ and $B$, their images $\phi(A)$ and $\phi(B)$ are single points.  Those points must be distinct, as $\phi$ is an isomorphism on $C$.
\end{example}

Elements $E_1,E_2,E_3$ of $M$ are a \emph{basis} if $F_{12}, F_{23}, F_{13}$  are distinct. A basic fact from matroid theory is: 
\begin{lemma}\label{matroidBasis}
Every pair of distinct elements can be completed to a basis. 
If $E_1,E_2,E_3$ is a basis, then every element $E_0$ forms a basis with at least two of them. 
\end{lemma}
\begin{proof}
As the matroid is simple, every distinct pair of elements $E_1,E_2$ is independent \cite[Definition 3.8]{KatzMatroids}. The size of any maximally independent set equals the rank which, by assumption, is three, so $E_1,E_2$ are contained in a basis. 
%
The second part follows immediately by extending $E_0$ to a basis, and using the matroid exchange property \cite[Definition 3.6]{KatzMatroids}. 
\end{proof}

We now turn to prove Theorem~\ref{rankMatroid}.
\begin{proof}
Let $G_M$ and $D_M$ be as in Definition~\ref{matroidGraph}.
If $M$ is realizable over $k$, then by \cite[Theorem 1.5]{Cartwright}, $D_M$ is smoothable over $k[[t]]$ in the sense of  Definition~\ref{def:smoothable}. That is, there is a curve over $k[[t]]$ whose generic fiber is a smooth $k((t))$-curve $X$, whose special fiber has dual graph $G_M$, and there is a divisor $D$ of rank $2$ on $X$ that specializes to $D_M$. By Corollary~\ref{specializationRefinement}, 
$$
2=r(D)\leq r^\text{alg}_k(D_M)\leq r_{G_M}(D_M)=2,
$$
so the algebraic rank of $D_M$ over $k$ is $2$.

Conversely, suppose that $r^{\text{alg}}_k(D_M)=2$. From the definition of the algebraic rank, there exists a curve $X_M$ dual to $G_M$ and a line bundle $L$ of rank at least $2$ whose degrees are prescribed by $D_M$. Let $(L,H)$ be a linear series such that $\dim(H)=3$, and $\phi_M$ the non-degenerate rational map to $\mathbb{P}^2$  defined in Lemma \ref{nonDegMap}.
 For an element $E$ of $M$, we denote by $v_{E}$ the corresponding vertex of $G_M$, and  $C^{E}$  the corresponding component of $X_M$. Similarly, for every flat $F$, we denote by $v_F$ and $C^F$  the corresponding vertex and component. In what follows, we will show that $\phi_M$ has no basepoints, maps each $C^E$ isomorphically onto a line, and those lines intersect as prescribed by $M$.

If $H$ has a basepoint on a component $C^F$ for some flat $F$ then,  as $D_M$ has degree zero on $v_f$, all the sections of $H$ vanish identically on it, and $\phi_M$ is not defined on that component. If $H$ has no basepoints on $C^F$, then $\phi_M$ maps it to a point. Similarly, if $H$ has more than one basepoint on a component $C^E$ then the sections of $H$ vanish identically on it, and if it has a single basepoint then $C^E$ is mapped by $\phi_M$ to a point.
Therefore, since $\phi_M$ is non-degenerate, there must be an element  $E_1$ such that $H$ has no basepoints on $C^{E_1}$, and $\phi_M$ maps it isomorphically onto a line $\ell_1$.  We claim, moreover, that  $\phi_M$ is defined on every component. Otherwise, let $E_2$ be such that  all the global sections of $H$ vanish on $C^{E_2}$. Then they also vanish on $C^{F_{12}}$, so $H$ has a basepoint on $C^{E_1}$, a contradiction. We conclude that $\phi_M$ is defined on every component, and either maps it to a point on $\ell_1$ or a line that intersects $\ell_1$. 

Next, we wish to show that, in fact, $\phi_M$ has not basepoints at all. By the non-degeneracy of the map, there is an element $E_2$ such that $\phi_M(C^{E_2})$ is another line $\ell_2$ meeting $\ell_1$ at a point $p$. Now, suppose for the sake of contradiction that $E_3$ is such that $H$ has a basepoint on $C^{E_3}$. Then $\phi_M(C^{E_3})$ is a point $q$ which is both on $\ell_1$ and $\ell_2$. The point $q$ must be different from $p$ since $\phi_M$ is an isomorphism on $C^{E_1}$ and $C^{E_2}$. But then, $p$ and $q$ are two different points lying on $\ell_1$ and $\ell_2$, contradicting the fact that they are different lines.  Therefore, $\phi_M$ maps every $C^{E}$ isomorphically onto a line.
 
It remains to show that each of these components maps to a distinct line. Suppose for contradiction that there are two components $C^{E'}, C^{E''}$ that map to the same line, and choose  $E'''$ such that $E',E'',E'''$ is a basis for $M$. As $\phi_M$ is one-to-one on each component, it follows that $C^{E'''}$ is mapped onto the same line as well. Now, let $E$ be any element of $M$. By Lemma \ref{matroidBasis}, $E$ forms a basis with two of the elements in $\{E',E'',E'''\}$, and therefore, $C^E$ is mapped to the same line. But this contradicts the non-degeneracy of $\phi_M$.

We have seen that $\phi_M$ maps each $C^F$ to a point, and each $C^E$ isomorphically onto a line. Therefore, every pair $E_1$ and $E_2$ of elements of $M$ correspond to distinct lines  $\phi_M(C^{E_1})$ and $\phi_M(C^{E_2})$  meeting at $\phi_M(C^{F_{12}})$. Since $\phi_M$ is one-to-one on each of them, their images cannot meet anywhere else. Therefore, the image of $\phi_M$ is a realization of $M$, and the proof is complete.
\end{proof}

\begin{remark}
Our theorem implies that when a simple matroid $M$ of rank $3$ is not realizable over $k$, the corresponding divisor $D_M$ cannot be smoothed over the fraction field of any discrete valuation ring whose residue field is $k$. This is a slight strengthening of  \cite[Theorem 1.5]{Cartwright}, which implies that $D_M$ is not smoothable over $k[[t]]$.
\end{remark}


\section{Relation with  metrized complexes}\label{sec:CLMMetrized}
In \cite{AB}, Amini and Baker develop the theory of metrized complexes, a generalization of tropical curves. Roughly speaking, a metrized complex is obtained from a tropical curve by placing smooth curves at the vertices of the graph, and defining linear equivalence in a way that combines chip firing on the graph with linear equivalence on the curves. We refer the reader to \cite{AB}  for an exposition of metrized complexes and their divisor theory. 

The graphs in this section are finite vertex-weighted graphs, and the edges of all the metrized complexes are assumed to have length $1$. For a divisor $\mathcal{D}$, we denote  its restriction to a component $C$ by $\mathcal{D}|_C$. For the rest of the section, fix an algebraically closed field $k$.

\begin{definition}
Let $\mathcal{D}$ be a divisor on a metrized complex $C$ with underlying graph $G$. If $D$ is the divisor obtained from $\mathcal{D}$ by forgetting the metrized complex structure, we say that $\mathcal{D}$ is an \emph{extension} of $D$. 
\end{definition}

Suppose that $\mathcal{D}$ is an extension of $D$. Then both $r^\text{alg}_k(D)$ and $r(\mathcal{D})$ are refinements of the combinatorial rank, obtained by associating algebraic curves to the vertices, and considering line bundles on them.
Therefore, we ask whether they are related. Our main result in this section is that, in general, there is no inequality between the ranks in either direction.

In the following example, the algebraic rank of a divisor is strictly greater than the rank of {\bf every} extension to {\bf any} metrized complex.
\begin{example} 
Let $G$ be a graph with three vertices $u,v,w$, such that there is a single edge between $u$ and $v$, and a single edge between $v$ and $w$. The weight of each vertex is set to be $1$. Let $D$ be the divisor $u + w$  (see Figure \ref{metrizedEx1}). 

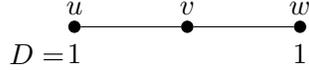
\begin{figure}[H]
\centering
\begin{tikzpicture}[scale=.5]
\begin{scope}[shift={($(2.5,-2.5)$)}]
\node at (0,0) {$u$};
\node [below] at (-1,-0.7) {$ D = $};
\node [below] at (0,-0.7) {$1$};
\node at (3,0) {$v$};
\node [below] at (3,-0.7) {$$};
\node at (6,0) {$w$};
\node [below] at (6,-0.7) {$1$};

\node[circle,fill=black,inner sep=1.5pt,draw] (a) at (0,-0.5) {};
\node[circle,fill=black,inner sep=1.5pt,draw] (b) at (3,-0.5) {};
\node[circle,fill=black,inner sep=1.5pt,draw] (c) at (6,-0.5) {};
\draw (a) edge[me=1] (b); 
\draw (c) edge[me=1] (b); 
\end{scope} 

\end{tikzpicture}   
\caption[The graph pertaining to example \ref{metrizedEx1}.] {The graph $G$.}
\label{metrizedEx1}
\end{figure}
\noindent The combinatorial rank of $D$ is $1$. Since $G$ is a hyperelliptic graph, it follows from \cite[Theorem 1.1]{KY2} that $r^\text{alg}_k(D) = 1$ as well. Now, let $\mathcal{D}$ be any extension of $D$ to a metrized complex $\mathcal{C}$. That is, $\mathcal{C}$ consists of three components $C_u$, $C_v$, and $C_w$ of genus $1$. There is single edge connecting $C_u$ and $C_v$, and a single edge connecting $C_v$ and $C_w$. $\mathcal{D}$ contains a single chip on $C_u$ and a single chip on $C_w$. Choose a point $x$ on $C_u$ where $\mathcal{D}$ does not have a chip. Then $\mathcal{D} - x$ is not linearly equivalent to an effective divisor. Therefore, $r_\mathcal{C}(\mathcal{D}) = 0$.
 
\end{example}

We conclude that the answer to Question \ref{smoothable} is negative:
\begin{corollary}
There exists a non-smoothable divisor $D$ whose algebraic and combinatorial ranks are equal. 
\end{corollary}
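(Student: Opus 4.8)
The plan is to show that the divisor $D=u+w$ from the preceding example is the desired counterexample. That example already supplies the two facts we need about its ranks: its combinatorial rank is $1$, and because $G$ is hyperelliptic, \cite[Theorem 1.1]{KY2} gives $r^\text{alg}_k(D)=1$ as well. Thus the algebraic and combinatorial ranks coincide, and it remains only to prove that $D$ is \emph{not} smoothable.

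First I would argue by contradiction, assuming $D$ is smoothable. By Definition~\ref{def:smoothable} this produces a discrete valuation ring with residue field $k$, a smooth projective curve $X$ over its fraction field together with a regular semistable model $\mathcal{X}$, and a divisor $\tilde D$ on $X$ of rank $r(\tilde D)=r_G(D)=1$ that specializes to $D$. The dual graph of the special fiber is $G$, and the vertex weights of $G$ record the genera of the components. Since every vertex of $G$ has weight $1$, the three components of the special fiber lying over $u$, $v$, and $w$ are smooth curves of genus $1$.

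Next I would pass to the language of metrized complexes. The regular semistable model endows the special fiber with the structure of a metrized complex $\mathcal{C}$ whose underlying graph is $G$ and whose vertex curves are the genus-$1$ components above. The Amini--Baker specialization map sends $\tilde D$ to a divisor $\mathcal{D}$ on $\mathcal{C}$ whose underlying graph divisor is $D$; in other words $\mathcal{D}$ is an \emph{extension} of $D$ in the sense of this section. The specialization inequality of \cite{AB} then yields
$$
1 = r(\tilde D) \leq r_\mathcal{C}(\mathcal{D}).
$$
On the other hand, the preceding example computes that \emph{every} extension of $D$ to a metrized complex with genus-$1$ vertex curves has rank $0$: choosing a point $x$ on $C_u$ away from the node shows $\mathcal{D}-x$ is not linearly equivalent to an effective divisor. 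This contradicts the inequality above, so $D$ cannot be smoothable, and $D$ is the required divisor.

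The crux of the argument is the third step—matching the abstract smoothing data against the metrized complex computation. The two points that require care are verifying that the vertex weights force genus-$1$ curves, so that the example's rank-$0$ computation genuinely applies to the specialization, and checking that the Amini--Baker specialization of $\tilde D$ is literally an extension of $D$ rather than merely of some linearly equivalent divisor. Since rank is invariant under linear equivalence the latter subtlety is harmless, but I would state it explicitly to keep the reduction to the example airtight.
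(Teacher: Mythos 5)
Your proof is correct and follows essentially the same route as the paper: assume smoothability, pass to the metrized complex given by the special fiber, apply the Amini--Baker Specialization Lemma to get $r_\mathcal{C}(\mathcal{D})\geq 1$, and contradict the example's computation that every extension of $D$ has rank $0$. Your explicit attention to the genus-$1$ vertex curves and to linear-equivalence invariance of the rank makes the reduction slightly more airtight than the paper's terser write-up, but the argument is the same.
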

\begin{proof}
Let $G$ and $D$ be as in the example above, so that $r_k^{\text{alg}}(D) = r_G(D) = 1$. Assume for contradiction  that $D$  is smoothable. That is, there exists a semistable model with smooth generic fiber $X$ and special fiber $X_0$ whose weighted dual graph is $G$, and a divisor $D_X$ on $X$ of rank $1$ whose specialization is $D$. Let $\mathcal{C}$ be the regularization of $X_0$ (see Section 1 of \cite{AB}), and $\mathcal{D}$ the specialization of $D$ to $\mathcal{C}$. By the Specialization Lemma for metrized complexes (\cite[Theorem 4.6]{AB}), $r_\mathcal{C}(\mathcal{D})\geq  r(D)= 1$. But as  shown above, $\mathcal{D}$ has rank zero since it is an extension of $D$.

\end{proof}

Next, we deal with the converse inequality by finding a divisor $D$ whose algebraic rank is strictly smaller than the rank of \textbf{any} of its extensions to metrized complexes. We will, in fact, prove a stronger result: every extension of $D$ can be chosen to be a limit $\mathfrak{g}^2_d$.  The idea originated from a question posed to the author by Matt Baker at the Joint Mathematical Meetings in Baltimore. 
We remind the reader of the definition of a limit $\mathfrak{g}^r_d$ on a metrized complex.

\begin{definition}[\cite{AB}]
Let $\mathcal{C}$ be a metrized complex. For each vertex $v$, choose a $k$-linear subspace $F_v$ of $k(C_v)$, and let $\mathcal{F}$ be the collection of these spaces. The $\mathcal{F}$-rank of a divisor $\mathcal{D}$ is the largest integer $r$ such that for any effective divisor $\mathcal{E}$ of degree $r$, there exists a rational function $\mathfrak{f}$ 
on $\mathcal{C}$ whose $C_v$-part belongs to $F_v$ for all $v$, such that $\mathcal{D}-\mathcal{E} + \text{div}(\mathfrak{f})$ is effective. 

If $\mathcal{F}$ can be chosen so that each $F_v$ is a subspace of dimension $r+1$ (where $r$ is the $\mathcal{F}$-rank of $\mathcal{D}$) of the global sections of some line bundle $L_v$ of degree $d$, then $\mathcal{D}$ is said to be a limit $\mathfrak{g}^r_d$.
\end{definition}

\noindent Clearly, if a divisor is a limit $\mathfrak{g}^r_d$, then its rank is at least $r$.

\begin{example}
Let $G$ and $D$ be the graph and divisor in Figure \ref{metrizedGraph} with all weights equal to zero.

\begin{figure}[h!]\label{metrizedGraph}
\centering
\begin{tikzpicture}[scale=.5]

\begin{scope}[shift={($(2.5,-2.5)$)}]
\node at (0,0) {$v_1$};
\node [below] at (-1,-0.7) {$ D = $};
\node [below] at (0,-0.7) {$1$};
\node at (3,0) {$v_3$};
\node [below] at (3,-0.7) {$3$};
\node at (6,0) {$v_2$};
\node [below] at (6,-0.7) {$2$};

\node[circle,fill=black,inner sep=1.5pt,draw] (a) at (0,-0.5) {};
\node[circle,fill=black,inner sep=1.5pt,draw] (b) at (3,-0.5) {};
\node[circle,fill=black,inner sep=1.5pt,draw] (c) at (6,-0.5) {};
\draw (a) edge[me=3] (b); 
\draw (c) edge[me=7] (b); 
\end{scope}
\end{tikzpicture}   
\caption{The graph $G$.}
\label{metrizedGraph}
\end{figure}
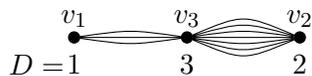

\noindent In \cite{CLM}, it was shown that $D$ has combinatorial rank $2$, but algebraic rank $1$. Here, we strengthen that result, and show that every extension of $D$ to a divisor on a metrized complex is a limit $\mathfrak{g}^2_6$. 

Indeed, let $\mathcal{C}$ be a metrized complex with underlying finite graph $G$ as in Figure \ref{metrizedExample}, where $C_1,C_2,C_3$ are smooth rational curves over $k$.

 
\begin{figure}[H]
\centering

\begin{tikzpicture}[scale=.5]

\coordinate (a) at (0,0);
\coordinate (b) at (9,0);

\draw (a) circle (3cm);
\node at ($(a)+(0,1.5)$) {$C_1$};
\draw[ thick] ($(a) + (-3,0)$) to [out=340,in=200] ($(a)+(3,0)$);
\draw[dashed, thick] ($(a) + (-3,0)$) to [out=20,in=160] ($(a)+(3,0)$);

\draw[fill] ($(a) +(2,1.5)$) circle [radius=0.08];
\draw[fill] ($(b) +(-2,1.5)$) circle [radius=0.08];
\draw[ thick] ($(a) +(2,1.5)$) to [out=20,in=160] ($(b) +(-2,1.5)$);
\draw[fill] ($(a) +(2,1)$) circle [radius=0.08];
\draw[fill] ($(b) +(-2,1)$) circle [radius=0.08];
\draw[ thick] ($(a) +(2,1)$) to [out=20,in=160] ($(b) +(-2,1)$);
\draw[fill] ($(a) +(2,-1)$) circle [radius=0.08];
\draw[fill] ($(b) +(-2,-1)$) circle [radius=0.08];
\draw[ thick] ($(a) +(2,-1)$) to [out=-20,in=200] ($(b) +(-2,-1)$);

\coordinate (a) at (9,0);
\coordinate (b) at (18,0);

\draw (a) circle (3cm);
\node at ($(a)+(0,1.5)$) {$C_3$};
\draw[ thick] ($(a) + (-3,0)$) to [out=340,in=200] ($(a)+(3,0)$);
\draw[dashed, thick] ($(a) + (-3,0)$) to [out=20,in=160] ($(a)+(3,0)$);

\draw[ thick] ($(a) +(2,2)$) to [out=20,in=160] ($(b) +(-2,2)$);
\draw[fill] ($(a) +(2,2)$) circle [radius=0.08];
\draw[fill] ($(b) +(-2,2)$) circle [radius=0.08];

\draw[ thick] ($(a) +(2,1.5)$) to [out=20,in=160] ($(b) +(-2,1.5)$);
\draw[fill] ($(a) +(2,1.5)$) circle [radius=0.08];
\draw[fill] ($(b) +(-2,1.5)$) circle [radius=0.08];

\draw[ thick] ($(a) +(2,1.0)$) to [out=20,in=160] ($(b) +(-2,1)$);
\draw[fill] ($(a) +(2,1.0)$) circle [radius=0.08];
\draw[fill] ($(b) +(-2,1)$) circle [radius=0.08];

\draw[ thick] ($(a) +(2,.5)$) to [out=20,in=160] ($(b) +(-2,.5)$);
\draw[fill] ($(a) +(2,.5)$) circle [radius=0.08];
\draw[fill] ($(b) +(-2,.5)$) circle [radius=0.08];

\draw[ thick] ($(a) +(2,-.5)$) to [out=-20,in=200] ($(b) +(-2,-.5)$);
\draw[fill]  ($(a) +(2,-.5)$) circle [radius=0.08];
\draw[fill] ($(b) +(-2,-.5)$) circle [radius=0.08];

\draw[ thick] ($(a) +(2,-1)$) to [out=-20,in=200] ($(b) +(-2,-1)$);
\draw[fill] ($(a) +(2,-1)$) circle [radius=0.08];
\draw[fill] ($(b) +(-2,-1)$) circle [radius=0.08];

\draw[ thick] ($(a) +(2,-1.5)$) to [out=-20,in=200] ($(b) +(-2,-1.5)$);
\draw[fill]  ($(a) +(2,-1.5)$) circle [radius=0.08];
\draw[fill] ($(b) +(-2,-1.5)$) circle [radius=0.08];

\coordinate (a) at (18,0);

\draw (a) circle (3cm);
\node at ($(a)+(0,1.5)$) {$C_2$};
\draw[ thick] ($(a) + (-3,0)$) to [out=340,in=200] ($(a)+(3,0)$);
\draw[dashed, thick] ($(a) + (-3,0)$) to [out=20,in=160] ($(a)+(3,0)$);

\end{tikzpicture}   
\caption{The metrized complex $\mathcal{C}$.}
\label{metrizedExample}

\end{figure}
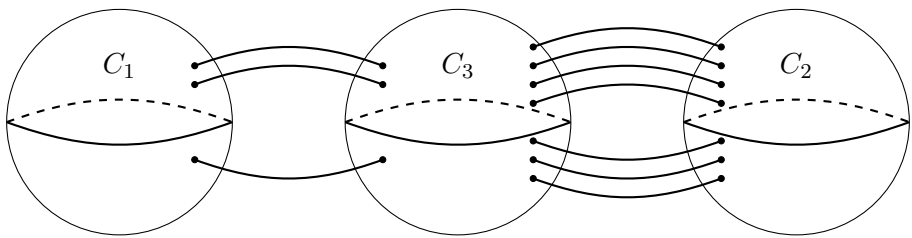

Let $\mathcal{D}$ be a divisor extending $D$. Then $\mathcal{D}$ contains a single chip on $C_1$, three on $C_3$, and two on $C_2$. To show that $\mathcal{D}$ is a limit $\mathfrak{g}^2_6$, we first need to choose  a subspace $H_i\subseteq k(C_i)$ of dimension $3$ for every component $C_i$ of $\mathcal{D}$. Let $x^1,y^1, z^1$ be the points of $C_1$ corresponding to the edges coming from $C_3$, and $x^3,y^3,z^3$ the corresponding points on $C_3$. Note that the property of being a limit $\mathfrak{g}^2_6$ is preserved under linear equivalence of divisors. Therefore, we may assume that $\mathcal{D}|_{C_1} = x^1+y^1$, and $\mathcal{D}|_{C_3} = x^3+y^3+z^3$. Now, choose 

\begin{description}[align=left,style=nextline,leftmargin=55mm]
\item $H_1 = H^0(C_1,\mathcal{O}_{C_1}(x^1+y^1))$,
\item $H_2=H^0(C_2,\mathcal{O}_{C_2}(\mathcal{D}|_{C_2}))$,
\item $H_3 = H^0(C_3,\mathcal{O}_{C_3}(x^3+y^3))$.
\end{description}

Let $\mathcal{E}$ be an effective divisor of degree $2$. By \cite[Corollary A.5]{AB}, we may assume that $\mathcal{E}$ is supported away from the edges. If the degree of $\mathcal{E}|_{C_1}$ is at most $1$,  then one easily finds functions $f_1,f_2,f_3$ in $H_1,H_2,H_3$ such that $\mathcal{D}_{C_i}+ \text{div}(f_i)$ is effective and contains $E_i$ for all $i$. Let $\mathfrak{f}$ be the rational function on $\mathcal{C}$ whose $G$-part is identically zero, and  $C_i$-part  $f_i$ for each $i$. Then $\mathcal{D} + \text{div}(\mathfrak{f})$ is effective and contains $\mathcal{E}$.

Otherwise, $\mathcal{E}$ consists of two chips $p,q$ on $C_1$. Let $f$ be the rational function on $G$ satisfying $\text{div}(f) = 3(v_1)-3(v_3)$. Choose $f_1$ to be the rational function on $C_1$ such that $\text{div}(f_1) = p+q-x^1-y^1$, and choose $f_2$ and $f_3$ as constant functions on $C_2,C_3$ respectively. Then $\mathcal{D}+\text{div}(\mathfrak{f})$ is effective and contains $\mathcal{E}$.


\end{example}

\noindent {\bf Acknowledgements.}
I would like to thank Dustin Cartwright for discussing his upcoming preprint with me, and for many helpful conversations. I am grateful   to  Sam Payne, Matt Baker and to Dhruv Ranganathan for many useful suggestions, and to the referees for their helpful remarks. 
 \bibliographystyle{plain}
\bibliography{bibalgnote}

\end{document}